\newtheorem{theorem}{Theorem}
\newtheorem{remark}{Remark}
\begin{document}
\title{On Kapteyn--Kummer Series' Integral Form}

\author{Tibor~K.~Pog\'any, \'Arp\'ad~Baricz,  Anik\'o~Szak\'al \vspace{2mm} \\
{\it Dedicated to Prof. RNDr. Zbyn\v ek N\'aden\'{\i}k to his 90th birth annyversary}

\thanks{{\bf Tibor K. Pog\'any} is with Institute of Applied Mathematics, \'Obuda University, 1034 Budapest, Hungary and Faculty 
of  Maritime Studies, University of Rijeka, 51000 Rijeka, Croatia; {\bf \'Arp\'ad Baricz} is with Institute of Applied Mathematics, 
\'Obuda  University, 1034 Budapest, Hungary and Department of Economics, Babe\c{s}-Bolyai University, 400591 Cluj--Napoca, Romania;  
{\bf Anik\'o Szak\'al} is with \'Obuda University, 1034 Budapest, Hungary.} 
\thanks{{\bf e--mail:}~{poganj@pfri.hr, tkpogany@gmail.com} (T.K. Pog\'any),~{bariczocsi@yahoo.com}
(\'A. Baricz),~{szakal@uni-obuda.hu} (A. Szak\'al).}}

\markboth{}%
{}

\maketitle
\allowdisplaybreaks

\begin{abstract}
In this short research note we obtain double definite integral expressions for the Kapteyn type series built by Kummer's $M$  
(or confluent hypergeometric ${}_1F_1$) functions. These kind of series unify in natural way the similar fashion results for Neumann--, 
Schl\"omilch-- and Kapteyn--Bessel series recently established by Pog\'any, S\"uli, Baricz and Jankov Ma\v sirevi\'c. 
\end{abstract}

\begin{IEEEkeywords}
Dirichlet series, Integral representation, Kamp\'e de F\'eriet function, Kapteyn series, Kummer function,  Neumann series, Schl\"omilch series.
\end{IEEEkeywords}

\section{Introduction and Preliminaries}

\IEEEPARstart{T}{\rm he} series of Bessel (or Struve) functions in which summation is realized with respect to the indices 
appearing in the order of the building term functions and/or wrapped arguments of the same input functions, can be unified
in a double lacunary form:
   \begin{equation} \label{A0}
	    \mathfrak B_{\ell_1, \ell_2}(z) := \sum_{n \geq 0} \alpha_n \mathscr B_{\ell_1(n)}(\ell_2(n)z).
	 \end{equation}
Here $ x \mapsto \ell_j(x) = \mu_j + a_jx,$ $j\in\{1,2\},$ $x \in\{0,1,\dots\}$, $z \in \mathbb C$ and $\mathscr B_{\nu}$ can be 
chosen from one of Bessel, Struve, Dini and another related special functions and/or their products, \cite{TKP1, JD}. This extension 
of the classical theory of the {\it so--called} Fourier--Bessel series of the first type is based on the case when 
$\mathscr B_{\nu} = J_{\nu}$ for which the thorough account was given in famous Watson's monograph \cite{watson} with extensive 
references list therein. However, specifying varying the coefficients of $\ell_1$ and $\ell_2,$ we appear to three cases related not 
only to physical models and have physical interpretations in many branches of science, technics and technology (consult for instance 
the corner-stone paper by Pog\'any and S\"uli \cite{PS1} and \cite[Introduction]{BP}), but are also of mathematical interest, 
like e.g. zero function series \cite{watson}. Thus, we differ the Neumann series ($a_1 \neq 0, a_2 = 0$) \cite{PS1, BJP, JPS}, 
Schl\"omilch series ($a_1 = 0, a_2 \neq 0$) \cite{JP} and the most general Kapteyn series ($a_1 \cdot a_2 \neq 0$) introduced by Willem 
Kapteyn in \cite{WK, WK1}. 

As our main goal concerns the Kapteyn series we will focus our exposition to this kind of series, pointing out that a set 
of problems associated with Kapteyn type series are solved in \cite{BJP0, JP1}. 

The Kummer's differential equation \cite[\S 13.2]{NIST}
   \[ z\, \frac{{\rm d}^2w}{{\rm d}z^2} + (b-z)\, \frac{{\rm d}w}{{\rm d}z} - aw = 0, \qquad w \equiv M(a, b, z)\]
is the limiting form of the hypergeometric differential equation with the {\it first standard} series solution  
   \[ M(a, b, z) = \sum_{n \geq 0} \frac{(a)_n}{(b)_n}\, \frac{z^n}{n!}, \qquad a \in \mathbb C,\, 
	                 b \in \mathbb C \setminus \mathbb Z_0^-\,.\]
The series converges for all $z \in \mathbb C$. Here $(a)_n = a (a+1) \cdots (a+n-1)$ stands for the standard Pochhammer symbol. 
Another notations which occur for Kummer's function: $\Phi(a;b;z), {}_1F_1(a; b; z)$.

Having in mind the structure of Fourier--Bessel series \eqref{A0} let us introduce the {\it Kapteyn--Kummer series} as
   \begin{align} \label{A1}
	    \mathscr K_\kappa(z) &=: \mathscr K_\kappa \Big( \begin{array}{c} a, b \\ \alpha, \beta, \zeta \end{array} ;z\Big) \nonumber \\
			                     &= \sum_{n \geq 0} \kappa_n\, M(a+\alpha n, b+\beta n, z(1+\zeta n)) \,,
	 \end{align}
where $\kappa_n \in \mathbb C$; the parameter range and the $z$--domain will be described in the sequel. We point out that for at 
least one non--zero $\alpha, \beta$, and $\zeta = 0$, this series becomes a Neumann--, while in the case 
$\alpha = \beta = 0, \zeta \neq 0$ we are faced with the Schl\"omilch--Kummer series. 

We are motivated by the fact that Kummer's function $M(a, b, z)$ generate diverse special functions such as \cite[pp. 507-8, \S 13.6. Special Cases]{AS}
   \begin{align*} 
	    M(\nu+\tfrac12, 2\nu+1, 2{\rm i}z)  &= \Gamma(1+\nu)\, {\rm e}^{{\rm i}z} \big( \tfrac12 z\big)^{-\nu}\!\! J_\nu(z) \\ 
			M(-\nu+\tfrac12, -2\nu+1, 2{\rm i}z) &= \Gamma(1-\nu)\, {\rm e}^{{\rm i}z} \big( \tfrac12 z\big)^\nu \\ \
			     &\hspace{-1.5cm} \times \big[ \cos(\nu\pi) J_\nu(z) - \sin(\nu\pi) Y_\nu(z)\big] \\ 
			M(\nu+\tfrac12, 2\nu+1, 2z) &= \Gamma(1+\nu)\, {\rm e}^z \big( \tfrac12 z\big)^{-\nu} I_\nu(z) \\ 
			M(\nu+\tfrac12, 2\nu+1, 2z) &= \pi^{-\frac12}\, {\rm e}^z (2z)^{-\nu} K_\nu(z)  \,, 
	 \end{align*}
where $J_\nu (I_\nu), Y_\nu (K_\nu)$ stand for the Bessel (modified Bessel) functions of the first and second kind of the order $\nu$ 
respectively, for which their Fourier--Bessel series have been studied in \cite{TKP1, JD, PS1, BJP, JPS, JP} and \cite{JP1},  
among others. Further special cases of $M$ listed in \cite[pp. 507-8, \S 13.6.]{AS} are: Hankel, spherical Bessel, Coulomb wave, 
Laguerre, incomplete gamma, Poisson--Charlier, Weber, Hermite, Airy, Kelvin, error function and also  
elementary functions like trigonometric, exponential and hyperbolic ones. These links from Kummer's $M$ to above mentioned 
special functions and then {\it a fortiori} to their Schl\"omilch--, Neumann-- and Kapteyn--series obviously justify the definition  
of the Kapteyn--Kummer $\mathscr K_\kappa$--series \eqref{A1}. 

Our main aim here is to establish integral representation formula for the Kapteyn--Kummer series $\mathscr K_\kappa$. The main 
derivation tools will be the associated Dirichlet series, the famous Cahen formula \cite{Cah} and the Euler--Maclaurin summation 
formula firstly used in similar purposes in \cite{TP1} and in \cite{PS1}. 

\section{Main results} 
The derivation of the integral representation formula we split into few crucial steps assuming that all auxiliary parameters  
$a, b, \alpha, \beta$ {\it mutatis mutandis} are non-negative, and $\zeta$ real. Further necessary constraints between them 
follow in step--by--step exposition. \medskip

\noindent {\bf 1. The convergence issue.} Having in mind the integral expression of Kummer's function \cite[p. 505, Eq. 13.2.1]{AS}
   \begin{equation} \label{B0}
	    M(a, b, z) = \frac{\Gamma(b)}{\Gamma(b-a)\Gamma(a)} \int_0^1 {\rm e}^{zt}\, t^{a-1}(1-t)^{b-a-1}\, {\rm d}t, 
	 \end{equation}
valid for all $\Re (b) > \Re (a) >0$, we transform the Kapteyn--Kummer series into 
   \begin{align} \label{B01} 
	    &\mathscr K_\kappa(z) = \sum_{n \geq 0} \frac{\kappa_n \Gamma(b+\beta n)}{\Gamma(b-a+(\beta-\alpha)n)\Gamma(a+\alpha n)} \nonumber \\
			          &\quad \times \int_0^1 {\rm e}^{z(1+\zeta n)t}\, t^{a+\alpha n-1}(1-t)^{b-a+(\beta-\alpha)n-1}\, {\rm d}t .
	 \end{align}
Hence, for all $\beta \geq \alpha \geq 0$ using \eqref{B01} we yield
   \begin{align} \label{B02} 
	    &\big|\mathscr K_\kappa(z)\big| \leq \sum_{n \geq 0}\frac{|\kappa_n| \Gamma(b+\beta n)}
			                        {\Gamma(b-a+(\beta-\alpha)n)\Gamma(a+\alpha n)} \nonumber \\
			          &\qquad \times \int_0^1 {\rm e}^{\Re(z)(1+\zeta n)t}\, t^{a+\alpha n-1}(1-t)^{b-a+(\beta-\alpha)n-1}\, {\rm d}t \notag \\
								&\leq  \sum_{n \geq 0}\frac{|\kappa_n| \Gamma(b+\beta n)}
			                        {\Gamma(b-a+(\beta-\alpha)n)\Gamma(a+\alpha n)} \nonumber \\
			          &\quad \times \int_0^1 {\rm e}^{|\Re(z)|(1+|\zeta| n)t}\, t^{a+\alpha n-1}(1-t)^{b-a+(\beta-\alpha)n-1}\, {\rm d}t \notag \\
								&\leq  {\rm e}^{|\Re(z)|}\sum_{n \geq 0}\frac{|\kappa_n| \Gamma(b+\beta n)\,{\rm e}^{|\zeta \Re(z)|n}}
			                        {\Gamma(b-a+(\beta-\alpha)n)\Gamma(a+\alpha n)} \nonumber \\
			          &\qquad \times \int_0^1 \, t^{a+\alpha n-1}(1-t)^{b-a+(\beta-\alpha)n-1}\, {\rm d}t \notag \\
								&=     {\rm e}^{|\Re(z)|} \sum_{n \geq 0} |\kappa_n|\, {\rm e}^{|\zeta \Re(z)|n}\,.
	 \end{align}
Here we employ the Euler Beta function's integral form and its connection to the Gamma function: 
   \[ {\rm B}(p, q) = \int_0^1 t^{p-1} (1-t)^{q-1}\, {\rm d}t = \frac{\Gamma(p)\,\Gamma(q)}{\Gamma(p+q)},\]
where $\min\big(\Re(p), \Re(q)\big)>0$. Indeed, specifying $p = a+ \alpha n$, $q = b-a + (\beta-\alpha)n$ \eqref{B02} 
immediately follows. Finally, by virtue of e.g. Cauchy's convergence test we get the convergence region of $\mathscr K_\kappa(z)$:
   \[ \mathsf R_\kappa'(\zeta) = \left\{ z \in \mathbb C \colon |\zeta \Re(z)| < 
			                  - \log \lim_{n \to \infty} \sqrt[n]{|\kappa_n|} \right\}\,, \]
for any fixed real $\zeta$.  \medskip

\noindent {\bf 2. The associated Dirichlet series.} The Dirichlet series
   \[ \mathscr D_{\boldsymbol a}(r) = \sum_{n \ge 1} a_n\, {\rm e}^{-r \lambda_n}, \]
where $\Re(r)>0,$ having positive monotone increasing divergent to infinity sequence $(\lambda_n)$, possesses Cahen's Laplace 
integral representation formula \cite[p. 97]{Cah}
   \begin{align*} 
      \mathscr D_{\boldsymbol a}(r) &= r \int_0^\infty {\rm e}^{-r t} \sum_{n \colon \lambda_n \leq t} a_n\, {\rm d}t \\ 
							 &= r \int_0^\infty \int_0^{[\lambda^{-1}(t)]} \mathfrak d_u a(u)\, {\rm d}t\,{\rm d}u\, ,
   \end{align*}
where $\mathfrak d_x = 1+\{x\}\frac{\rm d}{{\rm{d}}x}$ and $a \in {\rm C}^1(\mathbb R_+); (a_n) = a\big|_{\mathbb N}$, consult 
\cite{TP1, PS1}.\footnote{Here, $[x]$ and $\{x\}=x-[x]$ denote the integer and fractional part of $x \in \mathbb{R}$, 
respectively.} Indeed, the so--called counting sum 
   \[ \mathscr A_{\boldsymbol a}(t) = \sum_{n \colon \lambda_n \leq t} a_n\] 
we calculate by the Euler--Maclaurin summation formula, see \cite{TP1, PS1}. Hence, 
   \[ \mathscr A_{\boldsymbol a}(t) = \sum_{n=1}^{[\lambda^{-1}(t)]} a_n
                                    = \int_0^{[\lambda^{-1}(t)]} \mathfrak d_u a(u)\, {\rm d}u  \, ,\]
as $\lambda \colon \mathbb R_+ \mapsto \mathbb R_+$ is monotone, there exists unique inverse $\lambda^{-1}$ for the
function $\lambda \colon \mathbb R_+ \mapsto \mathbb R_+$, being $\lambda|_{\mathbb N} = (\lambda_n)$. 

The integral representation formula \eqref{B0} of Kummer's function enables to 
re--formulate the series \eqref{B01} into the following form
   \begin{align} \label{B2} 
	    \mathscr K_\kappa(z) &= \sum_{n \geq 0} \frac{\kappa_n \Gamma(b+\beta n)}{\Gamma(b-a+(\beta-\alpha)n)\Gamma(a+\alpha n)} \nonumber \\
			          &\qquad \times \int_0^1 {\rm e}^{z(1+\zeta n)t}\, t^{a+\alpha n-1}(1-t)^{b-a+(\beta-\alpha)n-1}\, {\rm d}t \notag \\
								&= \int_0^1 {\rm e}^{zt}\, t^{a-1}(1-t)^{b-a-1} \, \mathscr D_\kappa(t) \, {\rm d}t  \,,
	 \end{align}
where the Dirichlet series 
   \[ \mathscr D_\kappa(t) = \sum_{n \geq 0} \frac{\kappa_n\, \Gamma(b+\beta n)\,
	                           {\rm e}^{-\mathfrak p_t n}}{\Gamma(b-a+(\beta-\alpha)n)\Gamma(a+\alpha n)} \, .\]
Here the parameter $\mathfrak p_t = \log \left( t^{-\alpha} (1-t)^{\alpha-\beta}\right)-z\zeta t$ should have 
positive real part. In turn, bearing in mind that for $\zeta \Re(z)<0$ for all $t \in (0,1)$ it is
   \[  \Re(\mathfrak p_t) = -\alpha \log t-(\beta-\alpha)\log (1-t)- \zeta \Re(z)  t >0\, ,\]
we have to take into account the following subset of $\mathsf R_\kappa'(\zeta)$:
   \[ \mathsf R_\kappa(\zeta) = \left\{ z \in \mathbb C : \log \lim_{n \to \infty} \sqrt[n]{|\kappa_n|}<\zeta \Re(z) < 0 \right\}\,. \]
Using $z \in \mathsf R_\kappa(\zeta)$ being $\zeta$ fixed real, applying Cahen's formula and the consequent Euler--Maclaurin 
summation's condensed writing developed in \cite{TP1}, we arrive at  

\begin{theorem} {\it Let $\kappa \in {\rm C}^1(\mathbb R_+)$ be the function which restriction into $\mathbb N_0$ is the 
sequence $(\kappa_n)$. For all $b>a>0$; $\beta \geq \alpha \geq 0$; $\zeta \in \mathbb R$ and for all $z \in \mathsf R_\kappa(\zeta)$, we 
have
   \begin{equation} \label{B6}
	    \mathscr D_\kappa(t) = \frac{\kappa_0 \Gamma(b)}{\Gamma(b-a)\Gamma(a)}
			                     + \mathfrak p_t \int_0^\infty {\rm e}^{-\mathfrak p_t s} \mathscr A_\kappa(s)\, {\rm d}s\,, 
	 \end{equation}
where $\mathfrak p_t = \log \left( t^{-\alpha} (1-t)^{\alpha-\beta} {\rm e}^{-z\zeta t}\right)$ and}
   \[ \mathscr A_\kappa(s) = \int_0^{[s]} \mathfrak d_u\Big( \frac{\kappa(u)\, \Gamma(b+\beta u)}{\Gamma(b-a+(\beta-\alpha)u)
	                           \Gamma(a+\alpha u)}\Big)\, {\rm d}u\, . \]
\end{theorem}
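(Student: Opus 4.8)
The plan is to read \eqref{B6} as a single application of Cahen's Laplace--integral formula to the Dirichlet series defining $\mathscr D_\kappa(t)$, followed by the condensed Euler--Maclaurin evaluation of the resulting counting sum. First I would fix the ingredients by setting
\[ a_n = \frac{\kappa_n\,\Gamma(b+\beta n)}{\Gamma(b-a+(\beta-\alpha)n)\,\Gamma(a+\alpha n)}, \qquad \lambda_n = n, \qquad r = \mathfrak p_t, \]
so that $\mathscr D_\kappa(t) = \sum_{n \ge 0} a_n\,{\rm e}^{-\mathfrak p_t n}$. Because Cahen's formula is stated for summation from $n = 1$, the opening move is to detach the $n = 0$ term, whose value is precisely $a_0 = \kappa_0\Gamma(b)/[\Gamma(b-a)\Gamma(a)]$; this produces the free term in \eqref{B6}.

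Before invoking Cahen I would check its hypotheses on the tail $\sum_{n \ge 1} a_n\,{\rm e}^{-\mathfrak p_t n}$. The sequence $\lambda_n = n$ is positive, strictly increasing and divergent, with trivial inverse $\lambda^{-1}(s) = s$, so the counting index collapses to $[\lambda^{-1}(s)] = [s]$. The Laplace parameter must obey $\Re(\mathfrak p_t) > 0$: for $t \in (0,1)$ the computation displayed before the statement gives $\Re(\mathfrak p_t) = -\alpha\log t - (\beta-\alpha)\log(1-t) - \zeta\Re(z)\,t$, which is strictly positive under $\beta \ge \alpha \ge 0$ and $\zeta\Re(z) < 0$. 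Applying Cahen's representation then yields
\[ \sum_{n \ge 1} a_n\,{\rm e}^{-\mathfrak p_t n} = \mathfrak p_t \int_0^\infty {\rm e}^{-\mathfrak p_t s}\,\mathscr A_{\boldsymbol a}(s)\,{\rm d}s, \qquad \mathscr A_{\boldsymbol a}(s) = \sum_{n=1}^{[s]} a_n. \]

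It remains to cast the counting sum into closed integral shape. Here I would introduce the coefficient function $a(u) = \kappa(u)\,\Gamma(b+\beta u)/[\Gamma(b-a+(\beta-\alpha)u)\,\Gamma(a+\alpha u)]$, whose restriction to $\mathbb N_0$ returns $(a_n)$. Under $b > a > 0$ and $\beta \ge \alpha \ge 0$ the arguments $a + \alpha u$, $b - a + (\beta-\alpha)u$ and $b + \beta u$ remain strictly positive for $u \ge 0$, so the Gamma--ratio is $C^\infty$; together with $\kappa \in {\rm C}^1(\mathbb R_+)$ this makes $a \in {\rm C}^1(\mathbb R_+)$, exactly the regularity the condensed formula $\sum_{n=1}^{N} a_n = \int_0^N \mathfrak d_u a(u)\,{\rm d}u$ requires. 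Taking $N = [s]$ identifies $\mathscr A_{\boldsymbol a}(s)$ with $\mathscr A_\kappa(s)$, and \eqref{B6} follows.

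The crux I expect to be the verification that the series defining $\mathscr D_\kappa(t)$ actually converges for each fixed $t \in (0,1)$, i.e. that its abscissa of convergence stays strictly below $\Re(\mathfrak p_t)$. Since the Gamma--ratio in $a_n$ grows, by Stirling, at the exponential rate $C = \beta\log\beta - (\beta-\alpha)\log(\beta-\alpha) - \alpha\log\alpha$ (the $\log n$ contributions cancelling), that abscissa equals $C + \log\lim_{n\to\infty}\sqrt[n]{|\kappa_n|}$. The decisive observation is that $C$ is exactly the infimum over $t \in (0,1)$ of $-\alpha\log t - (\beta-\alpha)\log(1-t)$, attained at $t = \alpha/\beta$, whence $\Re(\mathfrak p_t) - C \ge -\zeta\Re(z)\,t > 0$ throughout $(0,1)$; combined with $\log\lim_{n\to\infty}\sqrt[n]{|\kappa_n|} < 0$, forced by membership in $\mathsf R_\kappa(\zeta)$, this pins the abscissa below $\Re(\mathfrak p_t)$ and legitimizes the interchanges above. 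The remaining algebra is then routine once this uniform convergence and the $C^1$ regularity of $a(u)$ are in hand.
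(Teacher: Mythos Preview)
Your proposal is correct and follows precisely the paper's own route: detach the $n=0$ term, apply Cahen's Laplace--integral formula to the tail with $\lambda_n=n$ and $r=\mathfrak p_t$, then rewrite the counting sum via the condensed Euler--Maclaurin identity $\sum_{n=1}^{[s]}a_n=\int_0^{[s]}\mathfrak d_u a(u)\,{\rm d}u$. Your Stirling computation of the abscissa of convergence and the matching entropy identity $C=\min_{t\in(0,1)}\big(-\alpha\log t-(\beta-\alpha)\log(1-t)\big)$ go beyond what the paper makes explicit, but they only sharpen a step the paper declares ``straightforward''.
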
 

\begin{proof} It only remains to explain the sum--structure of \eqref{B6}. As to the use of Cahen formula for the Dirichlet series, which involves 
summation over $n \in \mathbb N$, we re--write
   \[ \mathscr D_\kappa(t) = \frac{\kappa_0 \Gamma(b)}{\Gamma(b\!-\!a)\Gamma(a)} + \sum_{n \geq 1} 
                             \frac{\kappa_n\, \Gamma(b+\beta n)\,{\rm e}^{-\mathfrak p_t n}}
									           {\Gamma(b\!-\!a+(\beta\!-\!\alpha)n)\Gamma(a+\alpha n)} \, .\]
The rest is straightforward. 
\end{proof} 

\begin{remark} Obviously the constituting addend constant term $\kappa_0 \Gamma(b)\,(\Gamma(b-a)\Gamma(a))^{-1}$ can be avoided in 
the Dirichlet series' integral expression \eqref{B6} by considering $\kappa_0 = 0$ without loss of any generality. 
\hfill $\blacksquare$ \medskip
\end{remark}

\noindent {\bf 3. The master integral formula for $\mathscr K_\kappa(z)$.} In this subsection of the section II we will 
need further special functions and auxiliary results. Firstly, we recall the double series definition of the so--called 
Kamp\'e de F\'eriet hypergeometric function 
of two variables \cite{AK} in a notation given by Srivastava and Panda \cite[p. 423, Eq. (26)]{SP}. For this, let 
$(H_h)$ denotes the sequence of parameters $(H_1, \cdots, H_h)$ and for nonnegative integers signify the product of Pochhammer 
symbols $((H_h)) := (H_1)_n(H_2)_n \cdots (H_h)_n$, where when $n=0$, the product is understood to reduce to unity. Therefore, the 
convenient generalization of the Kamp\'e de F\'eriet function is defined as follows:
   \begin{align*} 
      F_{\,g:c;d}^{h:a;b}&\Big[ \begin{array}{c} (H_h) \, \colon \,(A_a)\,;\,(B_b) \\ (G_g)\, \colon \,(C_c)\,;\,(D_d) \end{array} 
			                      \Big|  \begin{array}{c} x\\y \end{array} \Big] \notag \\
                         &= \sum_{m,n \geq 0} \dfrac{((H_h))_{m+n}((A_a))_m((B_b))_n}{((G_g))_{m+n}((C_c))_m((D_d))_n}\, 
                            \dfrac{x^m}{m!}\, \dfrac{y^n}{n!}\, .
   \end{align*}
Putting now the integral expression \eqref{B6} of the Dirichlet series $\mathscr D_\kappa(t)$ into the integral form \eqref{B2} 
of the Kapteyn--Kummer series $\mathscr K_\kappa(z)$, by \eqref{B0}, we deduce  
   \begin{align} \label{X2}
      \mathscr K_\kappa(z) &= \kappa_0\, M(a, b, z) \notag \\
                &\quad + \int_0^1\int_0^\infty {\rm e}^{zt}t^{a-1} (1-t)^{b-a-1} \mathfrak p_t \mathscr A_\kappa(s)\,{\rm d}t{\rm d}s \, .
   \end{align}
Let us concentrate to the double integral $\mathscr I_\kappa(z)$ appearing above. By the legitimate change of integration order we have
   \begin{align} \label{X3}
      \mathscr I_\kappa(z) &= - \int_0^\infty \mathscr A_\kappa(s) \Bigg( \int_0^1 {\rm e}^{z(1+\zeta s)t}  \notag \\ 
                           &\qquad \times t^{a+\alpha s-1}(1-t)^{b-a+(\beta-\alpha)s-1} \notag \\
                           &\qquad \times \big( \zeta z t + \alpha \log t + (\beta-\alpha)\log(1-t) \big)\,{\rm d}t\Bigg) {\rm d}s \notag \\
                           &=: - \int_0^\infty \mathscr A_\kappa(s) \Big( \zeta z \mathscr J_\kappa (z, 1) 
                            + \alpha \frac{\partial}{\partial a} \mathscr J_\kappa (z, 0)  \notag \\
                           &\qquad + \beta \frac{\partial}{\partial b} \mathscr J_\kappa (z,0)\Big) {\rm d}s\,,    
   \end{align}
where for $\rho \in \{0,1\}$ the following auxiliary integral occurs:
   \[ \mathscr J_\kappa (z, \rho) = \int_0^1 {\rm e}^{z(1+\zeta s)t} t^{a+\alpha s-1+\rho}(1\!-\!t)^{b-a+(\beta-\alpha)s-1}{\rm d}s. \]
In turn, by \eqref{B0} it is explicitly 
   \[ \mathscr J_\kappa (z, \rho) = {\boldsymbol \Gamma_\rho(s)}\, M\big(a+\alpha s+\rho, b+\beta s+\rho, z(1+\zeta s)\big)\,, \]
where we use the short--hand
   \[ {\boldsymbol \Gamma_\rho(s)} = \frac{\Gamma(b-a+(\beta-\alpha)s)\Gamma(a+\alpha s+\rho)}{\Gamma(b+\beta s+\rho)} \,. \]
                    
\begin{theorem} {\it Let $\kappa \in {\rm C}^1(\mathbb R_+)$ be the function for which $\kappa\big|_{\mathbb N_0} = (\kappa_n)$. 
For all $b>a>0$; $\beta \geq \alpha>0$; $\zeta \in \mathbb R$ and for all $z \in \mathsf R_\kappa(\zeta)$, we 
have}
   \begin{align} \label{X4}
      &\mathscr K_\kappa(z) = \kappa_0\, M(a, b, z) \notag \\ 
                &\qquad   - \int_0^\infty \int_0^{[s]} \mathfrak d_u\Big( \frac{\kappa(u)\, \Gamma(b+\beta u)}{\Gamma(b-a+(\beta-\alpha)u)
	                          \Gamma(a+\alpha u)}\Big)\notag \\
	              &\quad    \times \Big( \zeta z {\boldsymbol \Gamma_1(s)}\, M\big(a+\alpha s+1, b+\beta s+1, z(1+\zeta s)\big) \notag \\
	              &\qquad   + M^* \big( \beta \frac{\partial}{\partial b} {\boldsymbol \Gamma_0(s)}
								          + \alpha \frac{\partial}{\partial a} {\boldsymbol \Gamma_0(s)}\big) \notag \\
	              &\qquad   + {\boldsymbol \Gamma_0(s)} \big( \beta \frac{\partial M^*}{\partial b}  
								          + \alpha \,\frac{\partial M^*}{\partial a}\big) \Big) {\rm d}s\,{\rm d}u. 
   \end{align} 
{\it where $\mathscr A_\kappa(s)$ and $\Gamma_\rho(s), \rho = 0, 1$ are described previously, while 
$M^* := M\big(a+\alpha s, b+\beta s, z(1+\zeta s)\big)$. Accordingly}
   \begin{align*}
	    \frac{\partial M^*}{\partial a} &= \frac{z(1+\zeta s)}{b+\beta s} \notag \\
			                     &\quad \times F_{\,2:0;1}^{1:1;2}\Big[\!\! \begin{array}{c} a+\alpha s+1: 1; 1, a+\alpha s \\ 
                                         2, b+\beta s+1:-; a+\alpha s+1 \end{array} \!\!\Big| \!\! 
												                 \begin{array}{c} z(1+\zeta s)\\ z(1+\zeta s) \end{array}\!\! \Big] \\
      \frac{\partial M^*}{\partial b} &= -\frac{(a+\alpha s)z(1+\zeta s)}{(b+\beta s)^2} \notag \\
			                     &\quad \times F_{\,2:0;1}^{1:1;2}\Big[ \begin{array}{c} a+\alpha s+1: 1; 1, b+\beta s \\ 
                                         2, b+\beta s+1:- ; b+\beta s+1\end{array} \!\!\Big| \!\! 
																				 \begin{array}{c} z(1+\zeta s)\\ z(1+\zeta s) \end{array}\!\! \Big] \, .
	 \end{align*}
\end{theorem}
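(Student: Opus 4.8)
The plan is to start from the representation \eqref{X3} of the double integral $\mathscr I_\kappa(z)$, where $\mathscr K_\kappa(z)=\kappa_0\,M(a,b,z)+\mathscr I_\kappa(z)$, and to evaluate its three constituent pieces in closed form. The first piece needs no work: by the Kummer integral \eqref{B0} one has $\mathscr J_\kappa(z,1)=\boldsymbol{\Gamma}_1(s)\,M(a+\alpha s+1,b+\beta s+1,z(1+\zeta s))$, so the $\zeta z\,\mathscr J_\kappa(z,1)$ summand already reproduces the first term inside the parentheses of \eqref{X4}. For the remaining two pieces I would write $\mathscr J_\kappa(z,0)=\boldsymbol{\Gamma}_0(s)\,M^*$ and apply the product rule in the parameters $a,b$, the prefactor ${\rm e}^{z(1+\zeta s)t}$ being free of $a,b$:
\[
\alpha\,\frac{\partial}{\partial a}\mathscr J_\kappa(z,0)+\beta\,\frac{\partial}{\partial b}\mathscr J_\kappa(z,0)
= M^*\Big(\alpha\,\frac{\partial\boldsymbol{\Gamma}_0}{\partial a}+\beta\,\frac{\partial\boldsymbol{\Gamma}_0}{\partial b}\Big)
+ \boldsymbol{\Gamma}_0\Big(\alpha\,\frac{\partial M^*}{\partial a}+\beta\,\frac{\partial M^*}{\partial b}\Big).
\]
Substituting the explicit counting integral $\mathscr A_\kappa(s)=\int_0^{[s]}\mathfrak d_u(\cdots)\,{\rm d}u$ into the outer integral of \eqref{X3} then assembles the three terms precisely into \eqref{X4}, the overall minus sign being the one already displayed there.

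The crux is the evaluation of the parameter derivatives $\partial M^*/\partial a$ and $\partial M^*/\partial b$ as Kampé de Fériet functions. Writing $A=a+\alpha s$, $B=b+\beta s$, $Z=z(1+\zeta s)$, so that $\partial/\partial a=\partial/\partial A$ and $\partial/\partial b=\partial/\partial B$ by the chain rule, I would differentiate the defining series $M^*=\sum_{N\ge0}\frac{(A)_N}{(B)_N}\frac{Z^N}{N!}$ term by term. Using $\frac{\partial}{\partial A}(A)_N=(A)_N\big(\psi(A+N)-\psi(A)\big)$ and $\frac{\partial}{\partial B}(B)_N^{-1}=-(B)_N^{-1}\big(\psi(B+N)-\psi(B)\big)$ together with $\psi(\xi+N)-\psi(\xi)=\sum_{k=0}^{N-1}(\xi+k)^{-1}$ yields
\[
\frac{\partial M^*}{\partial A}=\sum_{N\ge1}\frac{(A)_N}{(B)_N}\frac{Z^N}{N!}\sum_{k=0}^{N-1}\frac{1}{A+k},\qquad
\frac{\partial M^*}{\partial B}=-\sum_{N\ge1}\frac{(A)_N}{(B)_N}\frac{Z^N}{N!}\sum_{k=0}^{N-1}\frac{1}{B+k}.
\]

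The decisive identity is $\frac{1}{\xi+k}=\frac{1}{\xi}\frac{(\xi)_k}{(\xi+1)_k}$, which converts each nested finite sum into a ratio of Pochhammer symbols. Re-indexing by $n=k$ and $m=N-1-k$ turns the single sum with a nested finite sum into a genuine double series over $m,n\ge0$; recognizing $(1)_m/m!=(1)_n/n!=1$, $(2)_{m+n}=(m+n+1)!$, $(A)_N=A\,(A+1)_{N-1}$ and $(B)_N=B\,(B+1)_{N-1}$ collapses the prefactors to $Z/B$ and $-AZ/B^2$ respectively and exhibits exactly the two $F^{1:1;2}_{2:0;1}$ symbols claimed. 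In that match the paired parameters $A$ (a $y$-numerator) and $A+1$ (the $y$-denominator) produce the ratio $(A)_n/(A+1)_n$, while the common numerator $A+1$ against the common denominator $(2)_{m+n}$ reproduces the base coefficient; the computation for $\partial M^*/\partial B$ is identical with $A$ replaced by $B$ throughout. The fastest check is to reverse the steps, summing over $n$ at fixed $N=m+n+1$ and reading off the digamma difference.

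I expect two points to require the most care. The minor one is the interchange of integration order behind \eqref{X3} and the term-by-term differentiation of $M^*$; both are justified by the absolute and locally uniform convergence on $\mathsf R_\kappa(\zeta)$, where $|\zeta\Re(z)|<-\log\lim_{n\to\infty}\sqrt[n]{|\kappa_n|}$ and $\Re(\mathfrak p_t)>0$, so that Fubini's theorem and differentiation under the summation sign apply. The main obstacle is the bookkeeping of the last step: one must align the numerator and denominator parameter strings $(H_h)\!:\!(A_a);(B_b)$ and $(G_g)\!:\!(C_c);(D_d)$ of the Kampé de Fériet symbol with the reorganized double series, confirming in particular that the shift $N\mapsto m+n+1$ is correctly absorbed by $(2)_{m+n}$. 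This is where an off-by-one in the Pochhammer indices is easiest to commit, and I would guard against it by specializing to small $N$ before declaring the identity established.
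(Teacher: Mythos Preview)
Your argument is correct and tracks the paper's own proof almost exactly: assemble \eqref{X2}--\eqref{X3}, express $\mathscr J_\kappa(z,\rho)=\boldsymbol{\Gamma}_\rho(s)\,M$ via the Kummer integral \eqref{B0}, apply the product rule in $a,b$ to $\boldsymbol{\Gamma}_0(s)\,M^*$, and then identify $\partial M/\partial a$ and $\partial M/\partial b$ as Kamp\'e de F\'eriet $F^{1:1;2}_{2:0;1}$ functions. The only difference is that the paper simply quotes those two parameter-derivative formulae from the Wolfram Functions site, whereas you derive them from scratch via the digamma identity $\psi(\xi+N)-\psi(\xi)=\sum_{k<N}(\xi+k)^{-1}$ and the re-indexing $N=m+n+1$; this makes your write-up self-contained but is not a genuinely different route.
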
 

\begin{proof} Collecting all these expressions, that is \eqref{X2} and \eqref{X3}, we finish the proof. So, from 
   \begin{align*} 
      &\mathscr K_\kappa(z) = \kappa_0\, M(a, b, z) \notag \\ 
                &\qquad   - \int_0^\infty \int_0^{[s]} \mathfrak d_u\Big( \frac{\kappa(u)\, \Gamma(b+\beta u)}{\Gamma(b-a+(\beta-\alpha)u)
	                          \Gamma(a+\alpha u)}\Big)\notag \\
	              &\quad    \times \Big( \zeta z {\boldsymbol \Gamma_1(s)}\, M\big(a+\alpha s+1, b+\beta s+1, z(1+\zeta s)\big) \notag \\
	              &\qquad   + \beta \frac{\partial}{\partial b} {\boldsymbol \Gamma_0(s)}\, 
	                          M\big(a+\alpha s, b+\beta s, z(1+\zeta s)\big) \notag \\
	              &\qquad   + \alpha \,\frac{\partial}{\partial a} {\boldsymbol \Gamma_0(s)} 
								            M\big(a+\alpha s, b+\beta s, z(1+\zeta s)\big)\Big) {\rm d}s\,{\rm d}u, 
   \end{align*}
with some algebra the double integral will take the form
   \begin{align*} 
      \int_0^\infty &\int_0^{[s]} \mathfrak d_u\Big( \frac{\kappa(u)\, \Gamma(b+\beta u)}{\Gamma(b-a+(\beta-\alpha)u)
	                          \Gamma(a+\alpha u)}\Big)\notag \\
	              &\quad    \times \Big( \zeta z {\boldsymbol \Gamma_1(s)}\, M\big(a+\alpha s+1, b+\beta s+1, z(1+\zeta s)\big) \notag \\
	              &\qquad   + M^* \Big( \beta \frac{\partial}{\partial b} {\boldsymbol \Gamma_0(s)}
								          + \alpha \frac{\partial}{\partial a} {\boldsymbol \Gamma_0(s)}\Big) \notag \\
	              &\qquad   + {\boldsymbol \Gamma_0(s)} \Big( \beta  \frac{\partial M^*}{\partial b}  
								          + \alpha \,\frac{\partial M^*}{\partial a}\Big) \Big) {\rm d}s\,{\rm d}u. 
   \end{align*}
Applying the formulae \cite{http1, http2}
   \begin{align*} 
	    \frac{\partial}{\partial a} M(a, b, z) &= \frac zb F_{\,2:0;1}^{1:1;2}\Big[ \begin{array}{c} a+1: 1; 1, a \\ 
                                    2, b+1: - ; a+1 \end{array} \Big| \begin{array}{c} z\\ z \end{array} \Big] \\
      \frac{\partial}{\partial b} M(a, b, z) &= -\frac{az}{b^2} F_{\,2:0;1}^{1:1;2}\Big[ \begin{array}{c} a+1: 1; 1, b \\ 
                                    2, b+1: - ; b+1 \end{array} \Big| \begin{array}{c} z\\ z \end{array} \Big] 
	 \end{align*}
getting the partial derivatives of $M^*$, in which should be specified $a \rightarrow a+\alpha s$, $b \rightarrow b + \beta s$ 
and $z \rightarrow z(1+\zeta s)$, we arrive at the assertion of the Theorem 2. \medskip
\end{proof} 
    
\section{Toward to Neumann--Kummer and Schl\"omilch--Kummer series} 

As we have mentioned earlier in limiting case $\mathsf A.\,\, \alpha \to 0$ we get a {\it two--parameter Kapteyn--Kummer series}; 
when either $\mathsf B.\,\, \zeta \to 0$ or $\mathsf C. \,\,\alpha, \, \zeta \to 0$, this imply a {\it Neumann--Kummer series}. 

In the last possible common--sense case $\mathsf D.\,\, \beta \to 0$ we earn a {\it Schl\"omilch--Kummer series} -- all from 
$\mathscr K_\kappa(z)$ under the conditions of Theorem 2. 

We point out that for the sake of simplicity in this section we take vanishing $\kappa_0$.\bigskip

\noindent $\mathsf A.\,\, \alpha \to 0$. Since $\alpha \to 0$ independently of $\beta$, in this case we have a Kapteyn--Kummer series:
   \begin{align*}
	    \mathscr K_\kappa &\Big( \begin{array}{c} a, b \\ 0, \beta, \zeta \end{array} ;z\Big) =  
                  \int_0^\infty \int_0^{[s]} \mathfrak d_u\Big( \frac{-\kappa(u)\, \Gamma(b+\beta u)}{\Gamma(b-a+\beta u)}\Big)\notag \\
	              &\quad \times \Bigg( \zeta z a {\boldsymbol \Gamma_1(s)}\, M\big(a+1, b+\beta s+1, z(1+\zeta s)\big) \notag \\
	              &\qquad   + \beta \Big(M^*\big|_{\alpha=0} \frac{\partial}{\partial b} {\boldsymbol \Gamma_0(s)}
	                        + {\boldsymbol \Gamma_0(s)} \frac{\partial M^*|_{\alpha=0} }{\partial b} \Big)\Bigg) {\rm d}s\,{\rm d}u. 
													\bigskip
   \end{align*} 
	    
\noindent $\mathsf B.\,\, \zeta \to 0$. This case results in a two--parameter Neumann--Kummer series 
   \begin{align*}
	    \mathscr K_\kappa &\Big( \!\!\begin{array}{c} a, b \\ \alpha, \beta, 0 \end{array} \! ;z\!\Big) = 
			            \! \int_0^\infty \!\!\int_0^{[s]} \!\mathfrak d_u\Big( \frac{-\kappa(u)\Gamma(b+\beta u)/\Gamma(a+\alpha u)}
								            {\Gamma(b-a+(\beta-\alpha)u)} \Big)\notag \\
	              &\qquad   \times \Bigg( M^*|_{\zeta=0} \Big( \beta \frac{\partial}{\partial b}{\boldsymbol \Gamma_0(s)}
								          + \alpha \frac{\partial}{\partial a} {\boldsymbol \Gamma_0(s)}\Big) \notag \\
	              &\qquad \quad   + {\boldsymbol \Gamma_0(s)} \Big(\beta\, \frac{\partial M^*|_{\zeta=0}}{\partial b}  
								          + \alpha \,\frac{\partial M^*|_{\zeta=0}}{\partial a}\Big) \Bigg) {\rm d}s\,{\rm d}u. \bigskip
   \end{align*} 

\noindent $\mathsf C. \,\,\alpha, \, \zeta \to 0$. Further simplification of the previous integral gives one--parameter 
Neumann--Kummer series, reads as follows:
   \begin{align*}
	    \mathscr K_\kappa &\Big( \begin{array}{c} a, b \\ 0, \beta, 0 \end{array} ;z\Big) = 
			                    - \frac\beta{\Gamma(a)} \int_0^\infty \int_0^{[s]} \mathfrak d_u\Big( \frac{\kappa(u)\Gamma(b+\beta u)}
								            {\Gamma(b-a+\beta u)}\Big)\notag \\
	                      &\qquad \times \Bigg( M^*|_{\alpha,\zeta=0} \frac{\partial}{\partial b} {\boldsymbol \Gamma_0(s)}
								          + {\boldsymbol \Gamma_0(s)}  \frac{\partial  M^*|_{\alpha,\zeta=0}}{\partial b}  
								            \Bigg) {\rm d}s\,{\rm d}u. \bigskip
   \end{align*} 
	
\noindent $\mathsf D.\,\, \beta \to 0$. We end this overview of special cases of Master Theorem 2 with the Schl\"omilch--Kummer series 
integral representation formula
   \begin{align*} 
      \mathscr K_\kappa \Big( \begin{array}{c} a, b \\ 0, 0, \zeta \end{array} ;z\Big) &= -\frac{a\zeta z }{b} 
                               \int_0^\infty \int_0^{[s]} \mathfrak d_u \kappa(u)   \\ 
												      &\qquad \times M\big(a+1, b+1, z(1+\zeta s)\big)\,{\rm d}s\,{\rm d}u. 
   \end{align*}

\section{Acknowledgments} 
The authors are indebted to Dragana Jankov Ma\v sirevi\'c and the anonymous referee for providing insightful comments and valuable
suggestions which substantially encompassed the article.
 
The research of \'A. Baricz was supported by the J\'anos Bolyai Research Scholarship of the Hungarian Academy of Sciences.


\begin{thebibliography}{00}
\bibitem{TKP1}
T. K. Pog\'any, {\it Essays on Fourier--Bessel series}, Habilitation thesis, Budapest: Applied Mathematical Institute, 
\'Obuda University, 2015.

\bibitem{JD}
D. Jankov, {\it Integral expressions for series of functions of hypergeometric and Bessel types}, PhD thesis, Zagreb: 
Department of Mathematics, Faculty of Science, University of Zagreb, 2011.

\bibitem{watson}
G. N. Watson, {\it A Treatise on the Theory of Bessel Functions}, Cambridge: Cambridge University Press, 1922. 

\bibitem{PS1}
T. K. Pog\'any and E. S\"uli, ``Integral representation for Neumann series of Bessel functions," {\em Proc. Amer. Math. Soc.} 
{\bf 137}, pp. 2363--2368, 2009. 

\bibitem{BP}
\'A. Baricz and T. K. Pog\'any, ``Integral representations and summations of modified Struve function," {\it Acta Math. Hungar.} 
{\bf 141(3)} pp. 254-281, 2013. 

\bibitem{BJP}
\'A. Baricz, D. Jankov and T. K. Pog\'any, ``Integral representations for Neumann-type series of Bessel functions
$I_\nu, Y_\nu$ and $K_\nu$," {\it Proc. Amer. Math. Soc.} {\bf 140} (2012), pp. 951--960, 2012.

\bibitem{JPS}
D. Jankov, T. K. Pog\'any and E. S\"uli, ``On the coefficients of Neumann series of Bessel functions,"
{\it J. Math. Anal. Appl.} {\bf 380(2)}, pp. 628--631, 2011. Corrigendum to ``On the coefficients of Neumann series of 
Bessel functions": [J. Math. Anal. Appl. 380 (2011), No. 2, 628-631], \footnotesize{http://www.cs.ox.ac.uk/endre.suli/corrigendum.pdf}. 

\bibitem{JP}
D. Jankov and T. K. Pog\'any, ``Integral representation of Schl\"omilch series," {\it J. Classic. Anal.} {\bf 1(1)}, pp. 
75--84, 2012. 

\bibitem{WK}
W. Kapteyn, ``Recherches sur les functions de Fourier--Bessel," {\em Ann. Sci. de l'\'Ecole Norm. Sup.} {\bf 10}, pp. 91--120, 1893.

\bibitem{WK1}
W. Kapteyn, ``On an expansion of an arbitrary function in a series of Bessel functions," {\em Messenger of Math.} {\bf 35}, 
pp. 122--125, 1906.

\bibitem{BJP0}
\'A. Baricz, D. Jankov and T. K. Pog\'any, ``Integral representation of first kind Kapteyn series," 
{\it J. Math. Phys.} {\bf 52}, Art. 043518, 7pp, 2011.

\bibitem{JP1}
D. Jankov and T. K. Pog\'any, ``On coefficients of Kapteyn-type series," {\it Math. Slovaca} {\bf 65(2)}, pp. 403--410, 2014.

\bibitem{NIST}
F. W. J. Olver, D. W. Lozier, R. F. Boisvert and C. W. Clark (Editors), {\it NIST Handbook of Mathematical Functions}, 
United States Department of Commerce: Cambridge University Press, 2010.

\bibitem{AS}
M. Abramowitz, and I. A. Stegun (Editors), {\it Handbook of Mathematical Functions with Formulas, Graphs, and Mathematical Tables}. 
Applied Mathematics Series {\bf 55} (10 ed.), New York: United States Department of Commerce, National Bureau of Standards; 
Dover Publications, 1964. 

\bibitem{Cah}
E. Cahen, ``Sur la fonction $\zeta(s)$ de Riemann et sur des fontions analogues,"  
{\it Ann. Sci. l'\'Ecole Norm. Sup. S\'er. Math.} {\bf 11}, pp. 75--164, 1894.

\bibitem{TP1}
T. K. Pog\'any, ``Integral representation of a series which includes the Mathieu $\boldsymbol a$--series,"  
{\it J. Math. Anal. Appl.} {\bf 296(1)}, pp. 309--313, 2004. 

\bibitem{AK} 
P. Appell and J. Kamp\'e de F\'eriet, {\it Fonctions hypergeometrique. Polynomes d'Hermite}, Paris: Gautier--Villars, Paris, 1926. 

\bibitem{SP} 
H. M. Srivastava and R. Panda, ``An integral representation for the product of two Jacobi polynomials," 
{\it J. London Math. Soc. (2)} {\bf 12}, pp. 419--425, 1976. 

\bibitem{http1}
\footnotesize{functions.wolfram.com/HypergeometricFunctions/Hypergeometric1F1/

20/01/01/0002} 

\bibitem{http2}
\footnotesize{functions.wolfram.com/HypergeometricFunctions/Hypergeometric1F1/

20/01/02/0002}
\end{thebibliography}
\end{document}